\newtheorem{theorem}{Theorem}
\newtheorem{lemma}[theorem]{Lemma}
\newtheorem*{lemma*}{Lemma}
\newtheorem{corollary}[theorem]{Corollary}
\newtheorem{definition}[theorem]{Definition}
\newtheorem{remark}[theorem]{Remark}
\newtheorem*{fact*}{Fact}
\newcommand{\Z}{\mathbb{Z}}
\newcommand{\R}{\mathbb{R}}
\newcommand{\C}{\mathbb{C}}
\newcommand{\E}{\mathbb{E}}
\begin{document}

\title[Linear statistics of the circular $\beta$-ensemble]{Linear statistics of the circular $\beta$-ensemble, Stein's method, and circular Dyson Brownian motion}

\author[C. Webb]{Christian Webb}
\address{Department of mathematics and systems analysis, Aalto University, PO Box 11000, 00076 Aalto, Finland}
\email{christian.webb@aalto.fi}
\date{\today}

\begin{abstract}
We study the linear statistics of the circular $\beta$-ensemble with a Stein's method argument, where the exchangeable pair is generated through circular Dyson Brownian motion. This generalizes previous results obtained in such a way for the CUE and provides a novel approach for studying linear statistics of $\beta$-ensembles. This approach allows studying simultaneously a collection of linear statistics whose number grows with the dimension of the ensemble. Also this approach requires estimating only low order moments of the linear statistics.
\end{abstract}

\maketitle

\section{Introduction}

The goal of this note is to study linear statistics of the circular $\beta$-ensemble (which we will usually denote by C$\beta$E or C$\beta$E$(n)$ if we wish to stress the dimension). More precisely, if $(e^{ix_1},...,e^{ix_n})$ is a realization of the $n$-dimensional C$\beta$E, we shall study the Wasserstein-1 distance of the law of

\begin{equation}
T_d=\left(\sum_{j=1}^n e^{ikx_j}\right)_{k=1}^d
\end{equation}

\noindent to the law of 

\begin{equation}
G_d=\left(\sqrt{\frac{2}{\beta}j}Z_j\right)_{j=1}^d,
\end{equation}

\noindent where $Z_j$ are i.i.d. standard complex Gaussians. 

\vspace{0.3cm}

Our main result will be that if $d$ grows slowly enough with $n$, the distance goes to zero as $n\to\infty$. Our approach will be to apply Stein's method for which we shall generate an exchangeable pair through circular Dyson Brownian motion. The estimates one will then need to apply Stein's method involve some low order moments of $T_d$ for which we can make use of results of \cite{jm}.

\vspace{0.3cm}

The motivation for this approach comes from \cite{fulman, ds}, where a similar approach is used for $\beta=2$ (as well as the circular real ensemble and circular quaternion ensemble, i.e. the Haar measure on the orthogonal and symplectic groups), though the relevant dynamics is interpreted through the heat kernel on the unitary group which does not generalize so obviously to other values of $\beta$. 

\vspace{0.3cm}

While the fact that finite collections of such linear statistics converge jointly in law to independent Gaussians with suitable variances, is certainly known (e.g. the approach of \cite{johansson} should be easily adapted to the circular case and more recently such a result is proven in \cite{jm} - for other work related to the linear statistics of the C$\beta$E, see e.g. \cite{spohn,fw}), what our approach offers is a rate of convergence (which is likely to be extremely far from the true one - in the case of CUE the rate is known to be superexponential, see \cite{johansson2} - which is much faster than the one our approach suggests) as well as a possibility to study the joint convergence of linear statistics whose number grows with $n$. Another benefit of this approach is that one only needs to estimate only rather few moments. To the author's knowledge, such results aren't known for C$\beta$E. Moreover, this approach through Stein's method coupled with Dyson Brownian motion has potential to be applied to other $\beta$-ensembles.

\vspace{0.3cm}

The outline of this note is the following: we begin by recalling the definition of the C$\beta$E and the relevant Wasserstein distance as well as stating our main result. Next we shall recall the approach in \cite{ds} for multivariate complex normal approximation,  the definition of circular Dyson Brownian motion, and point out what the relevant estimates we shall need for applying Stein's method to our case. These estimates involve the generator of circular Dyson Brownian motion acting on certain power sums, which are simple to calculate exactly, along with moment bounds of power sums which can be estimated with results from \cite{jm}. Finally we point out as an application of the results of \cite{jm} a limit theorem for the logarithm of the characteristic polynomial of the C$\beta$E. This is very similar to a result of \cite{hko} for the CUE. 

\vspace{0.3cm}

{\bf{Acknowledgements:}} The author wants to thank two anonymous referees for helpful comments about the article, as well as K. Kyt\"ol\"a for useful discussions. This work was supported by the Academy of Finland.
 
\section{The circular $\beta$ ensemble, The Wasserstein distance, and the main result}

The purpose of this section is to state our main result and to do this, we recall the definition of the C$\beta$E and the Wasserstein-1 distance. 

\vspace{0.3cm}

\begin{definition}
Let 

\begin{equation}
\Delta_n=\lbrace (x_1,...,x_n)\in[0,2\pi]^n: x_1\leq x_2\leq ...\leq x_n\rbrace
\end{equation}

\noindent and $\beta>0$. The $n$-dimensional C$\beta$E is the following probability measure on $\Delta_n$: 

\begin{equation}
\frac{n!}{Z_{n,\beta}}\prod_{j<k}|e^{ix_j}-e^{ix_k}|^\beta \prod_{j=1}^n\frac{dx_j}{2\pi},
\end{equation}

\noindent where the normalization constant is a Selberg integral and can be evaluated exactly:

\begin{equation}
Z_{n,\beta}=\int_{[0,2\pi]^n}\prod_{j<k}|e^{ix_j}-e^{ix_k}|^\beta \prod_{j=1}^n\frac{dx_j}{2\pi}=\frac{\Gamma\left(1+n\frac{\beta}{2}\right)}{\Gamma\left(1+\frac{\beta}{2}\right)^n}.
\end{equation}

\end{definition}

\begin{remark}
We will often identify $[0,2\pi)$ with the unit circle and $\Delta_n$ with a subset of the $n$-fold product of the unit circle with itself.
\end{remark}

The Wasserstein-1 distance is a metric on the space of random variables taking values in a  fixed underlying space (which we'll take to be Euclidean, but more general cases are possible) with finite first absolute moment. Convergence with respect to it is equivalent to convergence in law along with convergence of the first absolute moment. Let us recall its two equivalent definitions (see e.g. Chapter 6 in \cite{villani} for more information on Wasserstein distances):

\begin{definition}
The Wasserstein-1 distance between the laws of two $\R^d$ (or $\C^d$ as we'll actually be interested in) valued random variables - $X$ and $Y$ - is 

\begin{equation}
\mathcal{W}_1^{(d)}(X,Y)=\inf \E(|X-Y|),
\end{equation}

\noindent where the infimum is over all couplings of $X$ and $Y$.

\vspace{0.3cm}

An equivalent definition for the metric (a result due to Kantorovich and Rubinstein) is given by  

\begin{align}
\notag \mathcal{W}_1^{(d)}(X,Y)=\sup\lbrace &\left.\E(f(X)) -\E(f(Y))\right| \ f:\R^d\to \R, \\
&|f(x)-f(y)|\leq |x-y|\ \mathrm{for\ all\ } x,y\in \R^d\rbrace.
\end{align}
\end{definition}

We can now state our main result. 

\begin{theorem}\label{th:main}
Let $(e^{ix_j})_{j=1}^n$ be drawn from the $n$-dimensional $\mathrm{C}\beta\mathrm{E}$ with $\beta>0$, $d=\mathit{o}(n^{\frac{2}{7}})$, 

\begin{equation}
T_d=\left(\sum_{j=1}^n e^{ikx_j}\right)_{k=1}^d
\end{equation}

\noindent and

\begin{equation}
G_d=\left(\sqrt{\frac{2}{\beta}j}Z_j\right)_{j=1}^d,
\end{equation}

\noindent where $Z_j$ are i.i.d. standard complex Gaussians. Then 

\begin{equation}
\mathcal{W}_1^{(d)}(T_d,G_d)=\mathcal{O}(d^{7/2}/n)
\end{equation}

\noindent as $n\to\infty$.
\end{theorem}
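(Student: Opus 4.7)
The plan is to build an exchangeable pair $(T_d, T_d')$ by starting the configuration $(e^{ix_j})_{j=1}^n$ from its stationary $\mathrm{C}\beta\mathrm{E}$ law and letting $T_d'$ be the value of $T_d$ after evolving for a short time $t>0$ under circular Dyson Brownian motion; stationarity makes the pair exchangeable. Applying the multivariate complex normal approximation framework of \cite{ds} and passing to the infinitesimal limit $t \to 0$, the Wasserstein-$1$ bound reduces to $L^2$-estimates of the linear-regression residual $L T_d + \Lambda T_d$ and of the carré-du-champ residual $\Gamma(T_d, T_d^*) - 2\Lambda\Sigma$. Here $L$ and $\Gamma$ are the generator and carré du champ of cDBM, $\Sigma = \mathrm{diag}(2k/\beta)_{k=1}^d$ is the target covariance matrix, and $\Lambda = \mathrm{diag}(\lambda_k)_{k=1}^d$ is to be chosen in the next step.

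The second step is the explicit calculation of $L$ and $\Gamma$ on the power sums $p_k := \sum_j e^{ikx_j}$ (the components of $T_d$). A symmetrization using $\cot((x-y)/2) = i(e^{ix}+e^{iy})/(e^{ix}-e^{iy})$ yields
\begin{equation*}
L p_k = -\lambda_k\, p_k - \frac{k\beta}{4}\sum_{m=1}^{k-1} p_m\, p_{k-m}, \qquad \lambda_k := \frac{k^2}{2} + \frac{k\beta(n-k)}{4},
\end{equation*}
and $\Gamma(p_k, \overline{p_l}) = kl\, p_{k-l}$ with the convention $p_0 := n$. The diagonal carré du champ $\Gamma(p_k, \overline{p_k}) = k^2 n$ is deterministic and satisfies $k^2 n / \lambda_k \to 4k/\beta = 2\cdot(2k/\beta)$ as $n \to \infty$, confirming that the dynamics has the correct fluctuation-dissipation structure to drive the process to independent complex Gaussians with variances $2k/\beta$ up to lower-order corrections.

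The third step is to bound the residuals using low moments of power sums. The diagonal of the carré-du-champ residual contributes only a deterministic $\mathcal{O}(k/n)$ correction; the off-diagonal entries $kl\, p_{k-l}$ for $k\neq l$ and the nonlinear drift $\frac{k\beta}{4}\sum_{m=1}^{k-1} p_m p_{k-m}$ are handled in $L^2$ by exploiting rotational invariance $x_j \mapsto x_j + \theta$, which forces $\E[p_{a_1} p_{a_2} \overline{p_{a_3} p_{a_4}}] = 0$ unless $a_1 + a_2 = a_3 + a_4$, together with the moment bounds of \cite{jm}. Expanding $\bigl|\sum_{m=1}^{k-1} p_m p_{k-m}\bigr|^2$ and applying these inputs gives an $L^2$-norm of order $k^{3/2}$, so that dividing by $\lambda_k \asymp k\beta n$ produces a per-component relative error of order $k^{1/2}/n$.

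Finally, feeding these per-component bounds into the Wasserstein inequality of \cite{ds}, whose dimensional dependence (through the regularity of the Stein-equation solution) contributes additional powers of $d$, and summing over $k = 1,\ldots,d$ yields the claimed rate $\mathcal{O}(d^{7/2}/n)$, which vanishes precisely in the regime $d = o(n^{2/7})$. The main technical obstacle will be the sharp $L^2$-bound on the nonlinear drift $\sum_{m=1}^{k-1} p_m p_{k-m}$: a naive Cauchy--Schwarz estimate combined with the worst-case bound $\|p_m\|_{L^2}^2 \asymp n$ would lose a full factor of $n$ and invalidate the final rate, so it is essential to use rotational invariance to decorrelate the summands and then invoke the quantitative power-sum moment bounds of \cite{jm}.
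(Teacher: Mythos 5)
Your proposal follows essentially the same route as the paper: an exchangeable pair from circular Dyson Brownian motion started at stationarity, the D\"obler--Stolz/Meckes multivariate complex normal approximation theorem in the infinitesimal $t\to0$ limit, the explicit cotangent-symmetrization computation of the generator and carr\'e du champ on power sums, and the Jiang--Matsumoto moment bounds (together with rotational invariance) to control the nonlinear drift and the off-diagonal carr\'e du champ. The only points to tidy are bookkeeping: the complex framework also requires controlling the pseudo-covariance term $\Gamma(p_k,p_l)=-kl\,p_{k+l}$ (handled identically via $\E|p_{k+l}|^2=\mathcal{O}(k+l)$), and the rate $d^{7/2}/n$ actually arises from the Hilbert--Schmidt norms of the off-diagonal carr\'e-du-champ matrices, of order $d^{7/2}$, multiplied by $\Vert\Lambda^{-1}\Vert_{\mathrm{op}}\Vert\Sigma^{-1/2}\Vert_{\mathrm{op}}=\mathcal{O}(1/n)$, rather than from your per-component estimate, which as stated omits the prefactor $k\beta/4$ on the nonlinear drift and so is off by a factor of $k$ (this does not affect the final answer, since the drift contribution $\mathcal{O}(d^{3}/n)$ is subdominant).
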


\begin{remark}
As in \cite{ds}, we could consider instead of $T_d$ a vector of the form 

\begin{equation}
\left(\sum_{j=1}^n e^{ikx_j}\right)_{k=r}^d,
\end{equation}

\noindent where also $r$ grows with $n$ and one will get constraints on $r$ and $d$ for the vanishing of the Wasserstein distance with similar methods as those we use. For simplicity, we only consider the case of $T_d$. 

\end{remark}

\begin{remark}
One can use this result to study linear statistics of functions on the unit circle with nice enough regularity by Fourier expanding them and applying our result. 
\end{remark}

\section{Stein's method and circular Dyson Brownian motion}

We'll give a short informal sketch of the Stein's method argument for multivariate normal approximation that will be relevant for us. For a detailed treatment, see e.g. \cite{meckes}. After this, we shall state the precise theorem (that appears in \cite{ds}) that we shall make use of. Next we shall review the definition and some basic properties of circular Dyson Brownian motion and how it ties into our Stein's method argument. 

\vspace{0.3cm}

\subsection{Stein's method} For simplicity we'll consider the case of real normal variables (the complex one follows from this). Let us assume that $\Sigma$ is a symmetric positive definite $d\times d$ matrix. We'll denote by $Y$ a $d$-dimensional vector of i.i.d. standard Gaussians and by $Y_\Sigma$ we denote $\sqrt{\Sigma}Y$.

\vspace{0.3cm}

We'll also use the following notation: by $\langle\cdot,\cdot\rangle_{\mathrm{HS}}$ we denote the Hilbert-Schmidt inner product of two matrices

\begin{equation}
\langle A,B\rangle_{\mathrm{HS}}=\mathrm{Tr}(AB^*),
\end{equation}

\noindent where $B^*$ denotes the Hermitian conjugate of $B$. We'll denote by $\Vert\cdot \Vert_{\mathrm{HS}}$ the corresponding norm.

\vspace{0.3cm}

We will then make use of the following facts (see \cite{meckes})

\begin{fact*}[Fact 1]
A random $d$-dimensional vector $X$ agrees in law with $Y_\Sigma$ if 

\begin{equation}
\E\left(\langle \mathrm{Hess}f(X),\Sigma\rangle_{\mathrm{HS}}-\langle X,\nabla f(X)\rangle\right)=0
\end{equation}

\noindent for each $f\in C^2(\R^d)$ for which the above integrand is in $L^1$ (with respect to the randomness). Here $\mathrm{Hess}f$ is the Hessian matrix of $f$, and the second inner product is the Euclidean inner product of $\R^d$.
\end{fact*}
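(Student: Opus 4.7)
The plan is to recognize the operator
\[
\mathcal{L}f(x) := \langle \mathrm{Hess}\,f(x), \Sigma \rangle_{\mathrm{HS}} - \langle x, \nabla f(x) \rangle
\]
as the infinitesimal generator of a multivariate Ornstein--Uhlenbeck process on $\R^d$ whose unique stationary distribution is the law $N(0,\Sigma)$ of $Y_\Sigma$, and to invert it via the associated semigroup. The statement then follows the standard Stein's method pattern: one constructs, for enough test functions $h$, a $C^2$ function $f$ solving $\mathcal{L}f = h - \E[h(Y_\Sigma)]$, and then the hypothesis feeds back the conclusion $\E[h(X)] = \E[h(Y_\Sigma)]$.

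\textbf{Consistency check.} As a sanity step (not strictly needed for the ``if'' direction that is claimed), I would first verify that $Y_\Sigma$ itself satisfies the identity. Writing $p_\Sigma$ for the density of $N(0,\Sigma)$ and using $\Sigma \nabla p_\Sigma(x) = -x\, p_\Sigma(x)$, Gaussian integration by parts gives
\[
\E[\langle \mathrm{Hess}\,f(Y_\Sigma),\Sigma\rangle_{\mathrm{HS}}] = \int \mathrm{Tr}(\Sigma\, \mathrm{Hess}\,f(x))\, p_\Sigma(x)\,dx = \int \langle x, \nabla f(x)\rangle\, p_\Sigma(x)\,dx = \E[\langle Y_\Sigma, \nabla f(Y_\Sigma)\rangle],
\]
confirming that $\mathcal{L}$ is the correct operator to look at.

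\textbf{Main step: the Stein equation.} Given a test function $h$ in a class that determines the law (for the application to $\mathcal{W}_1^{(d)}$, $1$-Lipschitz $h$ suffice), I would define the Ornstein--Uhlenbeck semigroup
\[
(P_t h)(x) = \E\bigl[h\bigl(e^{-t}x + \sqrt{1-e^{-2t}}\,Y_\Sigma\bigr)\bigr]
\]
and set
\[
f(x) = -\int_0^\infty \bigl(P_t h(x) - \E[h(Y_\Sigma)]\bigr)\,dt.
\]
Using $\partial_t P_t h = \mathcal{L} P_t h$ and the exponential contraction of $P_t$ on centered functions, one checks that the integral converges and solves $\mathcal{L} f = h - \E[h(Y_\Sigma)]$. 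Differentiation under the integral, exploiting that the Gaussian convolution in $P_t$ smooths $h$ for $t>0$, shows $f \in C^2(\R^d)$ with derivatives controlled in terms of $\|h\|_{\mathrm{Lip}}$ and $\|\Sigma\|$. Plugging this particular $f$ into the hypothesis yields $\E[h(X)] - \E[h(Y_\Sigma)] = \E[\mathcal{L}f(X)] = 0$. Since this holds for every $h$ in the determining class, $X \eqlaw Y_\Sigma$.

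\textbf{Main obstacle.} The technical heart is verifying regularity and growth estimates for $f$: one must show that the semigroup integral produces a $C^2$ function whose Hessian has enough polynomial control that the integrand in the hypothesis lies in $L^1$ with respect to the law of $X$, and that the derivative bounds survive integration in $t\in(0,\infty)$. These estimates follow from direct differentiation of the Gaussian kernel inside $P_t h$ combined with the exponential decay coming from centering, but they are the only nontrivial ingredient; everything else is bookkeeping around the Gaussian integration by parts.
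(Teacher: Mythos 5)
Your proposal is correct and follows the standard Ornstein--Uhlenbeck generator argument; the paper itself does not prove Fact~1 but defers to \cite{meckes}, whose proof is exactly this route, and your semigroup solution $f(x)=-\int_0^\infty\bigl(P_t h(x)-\E[h(Y_\Sigma)]\bigr)\,dt$ is precisely the paper's Fact~2 operator $U_o$ after the change of variables $t\mapsto e^{-2t}$ (up to sign convention). The only caveat worth recording is that your final step uses $\E\bigl|\langle X,\nabla f(X)\rangle\bigr|<\infty$ for the Stein solutions $f$, which holds because $\nabla f$ is bounded for Lipschitz $h$ and $X$ is assumed to have a finite first moment in the Wasserstein-1 setting.
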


\begin{fact*}[Fact 2]
If $g\in \C^\infty(\R^d)$, then 

\begin{equation}
h(x)=U_o g(x):=\int_0^1\frac{1}{2t}(\E g(\sqrt{t}x+\sqrt{1-t}Y_\Sigma)-\E g(Y_\Sigma))dt
\end{equation}

\noindent is a solution to the differential equation

\begin{equation}
\langle x,\nabla h(x)\rangle -\langle \mathrm{Hess}\ h(x),\Sigma\rangle_{\mathrm{HS}}=g(x)-\E g(Y_\Sigma).
\end{equation}

\end{fact*}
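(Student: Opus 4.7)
The plan is to verify the identity by differentiating the defining integral under the integral sign, combined with one application of Gaussian integration by parts and the fundamental theorem of calculus in $t$.

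Write $F(x,t):=\E g(\sqrt{t}\,x+\sqrt{1-t}\,Y_\Sigma)$, so that $h(x)=U_og(x)=\int_0^1 \frac{1}{2t}(F(x,t)-F(x,0))\,dt$, $F(x,0)=\E g(Y_\Sigma)$ and $F(x,1)=g(x)$. Differentiating $F$ in $t$ inside the expectation,
\begin{equation*}
\partial_t F(x,t)=\E\left[\nabla g(\sqrt{t}x+\sqrt{1-t}Y_\Sigma)\cdot\left(\frac{x}{2\sqrt{t}}-\frac{Y_\Sigma}{2\sqrt{1-t}}\right)\right],
\end{equation*}
and then applying the Gaussian integration-by-parts identity $\E[(Y_\Sigma)_i\phi(Y_\Sigma)]=\sum_j \Sigma_{ij}\E[(\partial_j\phi)(Y_\Sigma)]$ to the second term, the extra $\sqrt{1-t}$ produced by the chain rule on $\partial_j g$ exactly cancels the $1/\sqrt{1-t}$, yielding
\begin{equation*}
\partial_t F(x,t)=\frac{1}{2\sqrt{t}}\langle x,\E\,\nabla g\rangle-\frac{1}{2}\E\langle\mathrm{Hess}\,g,\Sigma\rangle_{\mathrm{HS}}.
\end{equation*}

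Differentiating $h$ in $x$ under the integral, the factor $1/(2t)$ absorbs the $\sqrt{t}$ or $t$ produced by the chain rule, giving
\begin{equation*}
\langle x,\nabla h(x)\rangle=\int_0^1 \frac{1}{2\sqrt{t}}\langle x,\E\,\nabla g\rangle\,dt,\qquad \langle \mathrm{Hess}\,h,\Sigma\rangle_{\mathrm{HS}}=\int_0^1 \frac{1}{2}\E\langle\mathrm{Hess}\,g,\Sigma\rangle_{\mathrm{HS}}\,dt.
\end{equation*}
Subtracting, the integrand collapses to $\partial_t F(x,t)$, and the fundamental theorem of calculus gives
\begin{equation*}
\langle x,\nabla h\rangle-\langle\mathrm{Hess}\,h,\Sigma\rangle_{\mathrm{HS}}=\int_0^1 \partial_t F(x,t)\,dt=F(x,1)-F(x,0)=g(x)-\E g(Y_\Sigma),
\end{equation*}
which is the asserted identity.

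The only obstacle is the technical justification of differentiation under the integral sign and of convergence of the integrals near $t=0$, where the factor $1/(2t)$ is a priori singular. For the test functions $g$ relevant to the Stein's method argument (Lipschitz, or smooth with bounded derivatives), a short Taylor expansion of $F(x,t)$ around $t=0$ shows that the integrand is in fact integrable. Equivalently and more cleanly, the change of variables $t=e^{-2s}$ recognizes $U_og(x)=\int_0^\infty(P_sg(x)-\E g(Y_\Sigma))\,ds$ as the integral of the Ornstein--Uhlenbeck semigroup $P_sg(x)=\E g(e^{-s}x+\sqrt{1-e^{-2s}}Y_\Sigma)$ with invariant measure the law of $Y_\Sigma$, whose exponential contractivity in $s$ makes convergence automatic; this is the interpretation used in \cite{meckes}.
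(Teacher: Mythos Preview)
The paper does not actually give its own proof of Fact~2: it is stated as a known fact with a reference to \cite{meckes}. Your argument is correct and is essentially the standard computation that underlies that reference --- recognizing $U_o$ as (a reparametrization of) the Ornstein--Uhlenbeck semigroup integrated in time, so that applying the generator $f\mapsto \langle \Sigma,\mathrm{Hess}\,f\rangle_{\mathrm{HS}}-\langle x,\nabla f\rangle$ and integrating gives back $-(g-\E g(Y_\Sigma))$ by the fundamental theorem of calculus. Your remark about the $t\to 0$ integrability and the substitution $t=e^{-2s}$ is exactly the right way to make the formal manipulation rigorous.
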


Let us now assume that we have a random vector $X$ for which we wish to show that the law of $X$ is close to that of $Y_\Sigma$ in the sense of the Wasserstein distance, and let us further assume that we have another random vector $X'$ on the same probability space as $X$ and $X'\stackrel{d}{=}X$. Moreover, let us assume that 

\begin{equation}
\E(X'-X|X)=-\Lambda X+V,
\end{equation}

\noindent for some invertible deterministic matrix $\Lambda$ and some random vector $V$. We'll want to think of $X'$ being close to $X$ so that when for example Taylor expanding $f(X')$ around $X$ for some function $f$, we can ignore high enough order terms. Also we assume that 

\begin{equation}
\E((X'-X)(X'-X)^T|X)=2\Lambda \Sigma+M,
\end{equation}

\noindent where $\Sigma$ is again our deterministic symmetric positive definite matrix and $M$ is a random $d\times d$-dimensional matrix.

\vspace{0.3cm}

Let us fix some $g\in C^\infty(\R^d)$ and let $f=U_o g$. Then as $X\stackrel{d}{=}X'$

\begin{align}
0&=\frac{1}{2}\E\left(\langle\Lambda^{-1}(X'-X),\nabla f(X')+\nabla f(X)\rangle\right)\notag \\
&=\frac{1}{2}\E\left(\langle\Lambda^{-1}(X'-X),\nabla f(X')-\nabla f(X)\rangle\right)+\E\left(\langle \Lambda^{-1}(X'-X),\nabla f(X)\rangle\right)\\
&=\notag \frac{1}{2}\E\left(\langle\Lambda^{-1}(X'-X),\mathrm{Hess}f(X) (X'-X)\rangle\right)+\E\left(\langle \Lambda^{-1}(X'-X),\nabla f(X)\rangle\right)\\
\notag &\qquad +...,
\end{align}

\noindent where we Taylor expanded $\nabla f(X')-\nabla f(X)$ around $X$ and $...$ denotes higher order terms in the expansion. Noting that

\begin{equation}
\langle\Lambda^{-1}(X'-X),\mathrm{Hess}f(X) (X'-X)\rangle=\langle \Lambda^{-1}(X'-X)(X'-X)^T,\mathrm{Hess} f(X)\rangle_{\mathrm{HS}}
\end{equation}

\noindent and conditioning on $X$, we find that 

\begin{align}
\notag 0&=\E(\mathrm{Hess}f(X),\Sigma\rangle_{\mathrm{HS}}-\E(\langle X,\nabla f(X)\rangle)\\
&\qquad  +\frac{1}{2}\E(\langle \Lambda^{-1}M,\mathrm{Hess}f(X)\rangle_{\mathrm{HS}})+\E(\langle \Lambda^{-1}V,\nabla f(X)\rangle)+...
\end{align}

From Fact 2, we then find

\begin{equation}
\E g(X)-\E g(Y_\Sigma)=\frac{1}{2}\E(\langle \Lambda^{-1}M,\mathrm{Hess}f(X)\rangle_{\mathrm{HS}})+\E(\langle \Lambda^{-1}V,\nabla f(X)\rangle)+...
\end{equation}

As $\mathrm{Hess}f$ and $\nabla f$ are bounded, if we can control $\Lambda^{-1}M$ and $\Lambda^{-1}V$ (and the higher order terms), this suggests that we can control the Wasserstein distance. This is indeed the case. We'll actually construct a one parameter family of the vectors $X'$ through Dyson Brownian motion started from an independent C$\beta$E realization and the closeness of $X$ and $X'$ will come from the $t\to 0$ limit. Let us state the actual theorem for the Stein's method argument in the following form (see Theorem 1.3 in \cite{ds} and Theorem 4 in \cite{meckes} for proofs)

\begin{theorem}[D\"obler and Stoltz, Meckes]\label{th:stein}
Let $W,W_t$ (for $t>0$) be $\C^d$ valued $L^2(\mathbb{P})$ random vectors on the same probability space $(\Omega,\mathcal{A},\mathbb{P})$ such that for each $t>0$, $(W,W_t)\stackrel{d}{=}(W_t,W)$. Let $Z\in\C^d$ be a $d$-dimensional random vector whose entries are i.i.d. standard complex Gaussians. Suppose that there exist non-random matrices $\Lambda,\Sigma\in \C^{d\times d}$ such that $\Lambda$ is invertible and $\Sigma$ is positive definite. Assume further that there exists a random vector $R\in \C^d$, random matrices $S,T\in \C^{d\times d}$, and a deterministic function $s:(0,\infty)\to \R$ with the following properties 

\begin{align*}
\mathrm{(i)} & \frac{1}{s(t)}\E\left(W_t-W|W\right)\stackrel{t\to 0}{\to}-\Lambda W+R \ \mathrm{in} \ L^1(\mathbb{P})\\
\mathrm{(ii)} & \frac{1}{s(t)}\E\left((W_t-W)(W_t-W)^*|W\right)\stackrel{t\to 0}{\to}2\Lambda \Sigma+S \ \mathrm{in} \ L^1(\Vert \cdot \Vert_{\mathrm{HS}},\mathbb{P})\\
\mathrm{(iii)} & \frac{1}{s(t)}\E\left((W_t-W)(W_t-W)^{T}|W\right)\stackrel{t\to 0}{\to} T \ \mathrm{in} \ L^1(\Vert \cdot \Vert_{\mathrm{HS}},\mathbb{P})\\
\mathrm{(iv)} & \lim_{t\to 0}\frac{1}{s(t)}\E\left(|W_t-W|^2\mathbf{1}_{\lbrace |W_t-W|^2>\epsilon\rbrace}\right)=0,
\end{align*}
 
 \noindent for each $\epsilon>0$.
 
 \vspace{0.3cm}
Then
 
\begin{equation}
\mathcal{W}_1^{(d)}(W,\sqrt{\Sigma}Z)\leq \Vert\Lambda^{-1}\Vert_{\mathrm{op}}\left(\E|R|+\frac{1}{2\pi}\Vert\Sigma^{-\frac{1}{2}}\Vert_{\mathrm{op}}\E(\Vert S\Vert_{\mathrm{HS}}+\Vert T\Vert_{\mathrm{HS}})\right),
\end{equation}

\noindent where $\Vert\cdot \Vert_{\mathrm{op}}$ denotes the operator norm: for $A\in \C^{d\times d}$ 

\begin{equation}
\Vert A\Vert_{\mathrm{op}}=\sup_{x\in \C^d:|x|=1} |Ax|.
\end{equation}

\end{theorem}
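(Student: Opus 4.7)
The plan is to make rigorous the heuristic exchangeable-pair argument sketched immediately above the theorem, adapted to complex vectors by identifying $\C^d$ with $\R^{2d}$ so that the real Stein machinery of Facts 1 and 2 applies. The key wrinkle of this identification is that the real $2d\times 2d$ conditional second moment of $W_t-W$ encodes both the Hermitian form $(W_t-W)(W_t-W)^*$ and the symmetric form $(W_t-W)(W_t-W)^T$ when re-expressed in complex coordinates, which is exactly why both hypotheses (ii) and (iii) are needed.

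The first task is to bound the derivatives of the Stein solution $f=U_o g$ for a $1$-Lipschitz test function $g$. Differentiating the defining integral in Fact 2 under the integral sign and integrating by parts against the Ornstein--Uhlenbeck semigroup, one obtains $\Vert\nabla f\Vert_\infty\le 1$ together with a sharp Hessian estimate of the form $\sup_x\Vert\mathrm{Hess}\, f(x)\Vert_{\mathrm{op}}\le C_\Sigma\Vert\Sigma^{-1/2}\Vert_{\mathrm{op}}$; the precise constant $C_\Sigma$ is calibrated by the maximum $1/\sqrt{2\pi}$ of the standard normal density appearing after the change of variables $\sqrt{t}\,x+\sqrt{1-t}\,Y_\Sigma$ in $U_o g$, and combines with the Taylor factor $1/2$ below to produce the $1/(2\pi)$ in the statement. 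Next, the exchangeability $(W,W_t)\eqlaw(W_t,W)$ forces the antisymmetric functional to have vanishing expectation,
\begin{equation}
0=\E\langle\Lambda^{-1}(W_t-W),\nabla f(W_t)+\nabla f(W)\rangle;
\end{equation}
rewriting this as
\begin{equation}
0=\E\langle\Lambda^{-1}(W_t-W),\nabla f(W_t)-\nabla f(W)\rangle+2\E\langle\Lambda^{-1}(W_t-W),\nabla f(W)\rangle,
\end{equation}
Taylor expanding the first bracket to second order about $W$, conditioning on $W$, dividing by $s(t)$, and letting $t\to 0$ using the $L^1$ convergence in (i)--(iii), one arrives (after invoking Fact 2 to identify the main Gaussian terms) at
\begin{equation}
\E g(W)-\E g(\sqrt{\Sigma}Z)=\tfrac12\E\langle\Lambda^{-1}(S+T),\mathrm{Hess}\, f(W)\rangle_{\mathrm{HS}}+\E\langle\Lambda^{-1}R,\nabla f(W)\rangle.
\end{equation}
The vanishing of the Taylor remainder $\mathcal O(|W_t-W|^3)/s(t)$ is precisely the content of the Lindeberg-type hypothesis (iv).

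Applying Cauchy--Schwarz to each term on the right and inserting the bounds on $\nabla f$ and $\mathrm{Hess}\, f$ from step one yields the Wasserstein estimate claimed in the theorem once one takes the supremum over $1$-Lipschitz test functions $g$ and invokes the Kantorovich--Rubinstein representation. The main obstacle in executing this plan is step one, the sharp Hessian regularity bound for $U_o g$: naive differentiation under the integral produces only a rougher estimate, and extracting the precise prefactor $\tfrac{1}{2\pi}\Vert\Sigma^{-1/2}\Vert_{\mathrm{op}}$ requires the semigroup integration-by-parts argument carried out in detail by Meckes. Everything else is bookkeeping of real-versus-complex bilinear forms (to verify that the Hessian contraction produces exactly $\Lambda^{-1}(S+T)$ and no other combinations) together with a standard dominated convergence argument justified by (iv).
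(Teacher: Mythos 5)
Your proposal follows essentially the same route as the paper: the paper does not prove Theorem~\ref{th:stein} itself but gives exactly this heuristic (exchangeable-pair antisymmetry, Taylor expansion of $\nabla f(W_t)-\nabla f(W)$, identification of the error terms with $\Lambda^{-1}R$, $\Lambda^{-1}S$, $\Lambda^{-1}T$ via Facts 1 and 2) and then defers the rigorous details --- in particular the gradient and Hessian bounds for $U_o g$ and the real-versus-complex bookkeeping --- to Theorem 1.3 of \cite{ds} and Theorem 4 of \cite{meckes}, which is precisely where your outline also places the technical weight. The only minor point worth noting is that killing the Taylor remainder uses condition (iv) together with the boundedness of $s(t)^{-1}\E|W_t-W|^2$ supplied by (ii), not (iv) alone.
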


\begin{remark}\label{rem:weaker}
As noted in \cite{ds}, we can replace the estimate for $\E(|W_t-W|^2\mathbf{1}_{|W_t-W|>\epsilon})$ by the weaker one 

\begin{equation}
\lim_{t\to 0}\frac{1}{s(t)}\E|W_t-W|^3=0
\end{equation}

\noindent since 

\begin{equation}
\E(|W_t-W|^2\mathbf{1}_{|W_t-W|>\epsilon})\leq \frac{1}{\epsilon}\E(|W_t-W|^3).
\end{equation}

\end{remark}

\subsection{Circular Dyson Brownian motion}

In this section we define circular Dyson Brownian motion and point out how it ties into our Stein's method argument.

\vspace{0.3cm}

Circular Dyson Brownian motion was introduced by Dyson \cite{dyson} and Discussed for example in \cite{spohn}. Its existence is proven in \cite{cl}. It is a model for diffusing particles confined to the unit circle and interacting with each other through a logarithmic repulsion. The main result of \cite{cl} is that one can make the following definition:

\begin{definition}[Circular Dyson Brownian motion]
Let $\beta >0$. $n$-dimensional Circular $\beta$-Dyson Brownian motion is a $C([0,\infty),\Delta_n)$ valued semimartingale (which we denote by $(x(t))_{t\geq 0}=(x_1(t),...,x_n(t))_{t\geq 0}$) which is the unique strong solution  to the system of stochastic differential equations

\begin{equation}\label{eq:dbm}
dx_j(t)=\frac{\beta}{2}\sum_{1\leq i\leq n, i\neq j}\cot \frac{x_j(t)-x_i(t)}{2}dt+\sqrt{2}db_j(t),
\end{equation}

\noindent for $j=1,...,n$. Here $b_j$ are i.i.d. standard Brownian motions.
\end{definition}

\begin{remark}
It is proven in \cite{cl}, that for $\beta\geq 1$ the particles almost surely do not collide (so $x_i(t)\neq x_j(t)$ for $i\neq j$ for all $t$), but for $\beta\in(0,1)$ they almost surely do.
\end{remark}

In the following remark we'll informally recall some basic facts from diffusion theory applied to our setting.

\begin{remark}\label{rem:stat}
As we are dealing with continuous semimartingales, we can make use of It\^o's lemma, and general facts from diffusion theory hold. In particular, a simple application of It\^o's lemma implies that we have for some fixed $x(0)\in \Delta_n$ and $C^2$ function $f$

\begin{equation}\label{eq:dynkin}
\E^{x(0)}\left[f(x(t))\right]=f(x(0))+\E^{x(0)}\left[\int_0^t [L_\beta f](x(s))ds\right],
\end{equation}

\noindent where $\E^{x(0)}$ denotes expectation with respect to the law of the process started from $x(0)$, and $L_\beta$ can be viewed as the infinitesimal generator of the process:

\begin{align}\label{eq:gene}
\notag L_\beta&=\frac{\beta}{2}\sum_{k=1}^n\sum_{l\neq k}\cot \frac{x_k-x_l}{2}\frac{\partial}{\partial x_k}+\frac{(\sqrt{2})^2}{2}\sum_{k=1}^n\frac{\partial^2}{\partial x_k^2}\\
&=\frac{\beta}{2}i\sum_{k=1}^n\sum_{l\neq k}\frac{e^{ix_k}+e^{ix_l}}{e^{ix_k}-e^{ix_l}}\frac{\partial}{\partial x_k}+\sum_{k=1}^n\frac{\partial^2}{\partial x_k^2}.
\end{align}

As for $\beta<1$ there can be collisions, there is some care to be taken about what the precise domain of the infinitesimal generator is (for example, if $f\in C^2(\Delta_n)$ is a function in the domain of the generator, then one must have that $\lim_{x_{j+1}\to x_j}\cot [(x_{j+1}-x_j)/2](\partial_{j+1}-\partial_j)f$ is finite, or in other words, $\partial_{j+1}f(x)|_{x_{j+1}=x_j}=\partial_j f(x)|_{x_{j+1}=x_j}$).

From \eqref{eq:dynkin} we see that if $\rho_t(x;x(0))$ is the density of the law of $x(t)$ started at $x(0)$, then it satisfies the equation 

\begin{equation}
\partial_t \rho_t(x,x(0))=L_\beta^* \rho_t(x,x(0)),
\end{equation}

\noindent where $L_\beta^*$ is the adjoint of $L_\beta$:

\begin{equation}
L_\beta^* f=\sum_{k=1}^n \frac{\partial^2}{\partial x_k^2}f-\frac{\beta}{2}\sum_{k=1}^n\sum_{l\neq k}\frac{\partial}{\partial x_k}\left[\cot\left(\frac{x_k-x_l}{2}\right)f\right].
\end{equation}

This implies that the C$\beta$E is a stationary distribution for circular Dyson Brownian motion. To see this, note that for 

\begin{align}
\rho(x)&=C_{n,\beta}\prod_{j<k}|e^{ix_j}-e^{ix_k}|^\beta\\
\notag &=C_{n,\beta}e^{-\beta \sum_{j<k}V(x_j-x_k)},
\end{align}

\noindent where $C_{n,\beta}$ is a normalization constant, and $V(x)=-\log|2\sin \frac{x}{2}|$, a simple calculation making use of the fact that $\frac{1}{2}\cot(x/2)=-V'(x)$ shows that 

\begin{equation}
L_\beta^*\rho=0.
\end{equation}

Thus the unique solution to $\partial_t\rho_t(x,x(0))=L_\beta^*\rho_t(x,x(0))$ with initial data given by the C$\beta$E: $\rho_0(x,x(0))=\rho(x)$, is $\rho_t(x,x(0))=\rho(x)$ - or the C$\beta$E is a stationary distribution for circular Dyson Brownian motion. 

A similar argument shows that for $g$ in the domain of $L_\beta$, $L_\beta^*[g \rho]=[L_\beta g]\rho$, i.e. that $L_\beta$ is in fact self-adjoint on $L^2(\rho)$. Thus the C$\beta$E is a reversible measure for Dyson Brownian motion which implies that the pair $(x(0),x(t))$ is exchangeable (i.e. $(x(0),x(t))\stackrel{d}{=}(x(t),x(0))$) for each $t>0$.
\end{remark}

Let us now prove our main estimates required for applying Theorem \ref{th:stein}. This entails estimating $\E(f(x(t))|x(0))$, when $f$ is a function relevant to Theorem \ref{th:stein}. This will be done through estimates on $L_\beta f$ for relevant $f$. For $\beta=2$ \cite{fulman,ds} make use of similar results for the heat kernel of the unitary group found in \cite{rains,levy} with a different kind of approach.

\begin{lemma}\label{le:powersum}
Let $x(0)$ be distributed according to the C$\beta$E and independent of $(b_j(t))$. Also let $k\in \Z$ and write for $x\in [0,2\pi]^n$, $p_k(x)=\sum_{j=1}^n e^{ikx_j}$. Then 

\begin{align}
\frac{1}{t}\left[\E^{x(0)}[p_k(x(t))]-p_k(x(0))\right]&\stackrel{t\to 0}{\rightarrow}[L_\beta p_k](x(0)),
\end{align} 

\noindent and 

\begin{equation}
L_\beta p_k=- n\frac{\beta}{2}|k|p_k-\left(1-\frac{\beta}{2}\right)k^2 p_k-\frac{\beta}{2}|k|\sum_{l=1}^{|k|-1}p_{\mathrm{sgn}(k)l}p_{\mathrm{sgn}(k)(|k|-l)},
\end{equation}

\noindent where $\mathrm{sgn}(k)=k/|k|$ for $k\neq 0$ and $L_\beta$ is the operator from \eqref{eq:gene}.

Moreover, for $k,l\in \Z$

\begin{align}
\frac{1}{t}\left[\E^{x(0)}[p_k(x(t))p_l(x(t))]-p_k(x(0))p_l(x(0))\right]&\stackrel{t\to 0}{\rightarrow}[L_\beta (p_k p_l)](x(0)),
\end{align} 

\noindent and 

\begin{equation}
L_\beta (p_k p_l)=p_kL_\beta p_l+p_l L_\beta p_k -2kl p_{k+l}.
\end{equation}

In both cases, the convergence is in $L^1$ with respect to the law of the C$\beta$E.

\end{lemma}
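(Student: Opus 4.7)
The plan is to prove both convergence statements in parallel by reducing them to explicit computations of $L_\beta p_k$ and $L_\beta(p_k p_l)$. Dynkin's formula \eqref{eq:dynkin} gives
\begin{equation*}
\frac{1}{t}\bigl(\E^{x(0)}[f(x(t))] - f(x(0))\bigr) - L_\beta f(x(0)) = \frac{1}{t}\int_0^t \E^{x(0)}\bigl[L_\beta f(x(s)) - L_\beta f(x(0))\bigr] \d s
\end{equation*}
for $f = p_k$ or $f = p_k p_l$. Integrating against C$\beta$E and applying Fubini together with stationarity (Remark \ref{rem:stat}) majorizes the $L^1(\mathrm{C}\beta\mathrm{E})$ norm of the left-hand side by $\sup_{s \in [0,t]} \E_{\mathrm{C}\beta\mathrm{E}} |L_\beta f(x(s)) - L_\beta f(x(0))|$. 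Once one has shown that $L_\beta f$ extends to a trigonometric polynomial in $e^{\pm i x_j}$ (hence bounded and continuous on the whole $n$-torus), path continuity of circular Dyson Brownian motion yields pointwise convergence of the integrand to zero, and dominated convergence closes the argument. Everything therefore reduces to computing $L_\beta p_k$ and $L_\beta(p_k p_l)$ explicitly.

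For $L_\beta p_k$ with $k > 0$, the Laplacian part of \eqref{eq:gene} contributes $-k^2 p_k$ since $\partial_j^2 e^{ikx_j} = -k^2 e^{ikx_j}$. For the drift, I would substitute $\partial_j p_k = ik e^{ikx_j}$ and symmetrize the double sum under $j \leftrightarrow l$, which turns the summand into a constant multiple of
\begin{equation*}
\frac{(e^{ix_j} + e^{ix_l})(e^{ikx_j} - e^{ikx_l})}{e^{ix_j} - e^{ix_l}};
\end{equation*}
crucially the apparent $\cot$-type singularity has cancelled. Using the identity $\frac{a^k - b^k}{a - b} = \sum_{m=0}^{k-1} a^{k-1-m} b^m$ to expand this into a polynomial, and summing over $j \neq l$ via $\sum_{j \neq l} e^{i(k-m) x_j} e^{i m x_l} = p_{k-m} p_m - p_k$, the drift contribution collapses to a multiple of $(n - k) p_k + \sum_{m=1}^{k-1} p_m p_{k-m}$. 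Combining with the Laplacian term and tidying yields the stated formula, and the $k < 0$ case follows by complex conjugation, which accounts for the $|k|$ and $\mathrm{sgn}(k)$ factors.

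For $L_\beta(p_k p_l)$ I would use the second-order Leibniz rule: the drift is first order and hence obeys the product rule, while the Laplacian $\sum_j \partial_j^2$ contributes an additional gradient term, giving
\begin{equation*}
L_\beta(p_k p_l) = p_k L_\beta p_l + p_l L_\beta p_k + 2 \sum_{j=1}^n (\partial_j p_k)(\partial_j p_l).
\end{equation*}
The cross term evaluates to $2 \sum_j (ik e^{ikx_j})(il e^{ilx_j}) = -2 kl \, p_{k+l}$, which is the claimed identity. The main technical subtlety, and the step I expect to require the most care, is justifying Dynkin's formula when $\beta \in (0,1)$ where collisions occur almost surely; but because both $L_\beta p_k$ and $L_\beta(p_k p_l)$ extend to smooth functions on $\mathbb{T}^n$ after the cancellations above, the singular part of the drift in \eqref{eq:gene} is harmless when acting on our specific test functions, and the dominated convergence step of the first paragraph goes through without incident.
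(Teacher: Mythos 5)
Your proposal is correct and follows essentially the same route as the paper: Dynkin's formula plus boundedness of $L_\beta p_k$ (a trigonometric polynomial) and path continuity to get the $L^1$ limit via dominated convergence, symmetrization of the drift sum to cancel the $\cot$-singularity followed by the geometric-sum expansion of the divided difference, and the second-order Leibniz rule producing the cross term $-2kl\,p_{k+l}$. The only differences are cosmetic (you remove the diagonal via $\sum_{j\neq l} = p_{k-m}p_m - p_k$ where the paper adds and subtracts the full double sum, and you phrase the $L^1$ step through stationarity and a supremum over $s\in[0,t]$), and your intermediate drift expression $-\tfrac{\beta k}{2}\bigl[(n-k)p_k + \sum_{m=1}^{k-1}p_m p_{k-m}\bigr]$ checks out against the stated formula.
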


\begin{proof}
Let us first establish the claims about the action of $L_\beta$ on $p_k$ and $p_kp_l$. We have from \eqref{eq:gene}

\begin{align}
L_\beta p_k(x)&=-k^2p_k(x)+\frac{\beta}{2}i\sum_{l=1}^n\sum_{m\neq l}\frac{e^{ix_m}+e^{ix_l}}{e^{ix_m}-e^{ix_l}}ik e^{ikx_m}.
\end{align} 

Then note that 

\begin{align}
\mathcal{S}_k&:=\sum_{m=1}^n\sum_{l\neq m}\frac{e^{ix_m}+e^{ix_l}}{e^{ix_m}-e^{ix_l}}e^{ikx_m}\\
\notag &=\sum_{m=1}^n\sum_{l\neq m}(e^{ix_m}+e^{ix_l})\frac{e^{ikx_m}-e^{ikx_l}}{e^{ix_m}-e^{ix_l}}+\sum_{m=1}^n\sum_{l\neq m}(e^{ix_m}+e^{ix_l})\frac{e^{ikx_l}}{e^{ix_m}-e^{ix_l}}\\
\notag &=\sum_{m=1}^n\sum_{l\neq m}(e^{ix_m}+e^{ix_l})\frac{e^{ikx_m}-e^{ikx_l}}{e^{ix_m}-e^{ix_l}}-\mathcal{S}_k.
\end{align}

We thus conclude that 

\begin{align}
L_\beta p_k(x)&=-k^2p_k(x)-\frac{\beta}{4}k\sum_{m=1}^n\sum_{l\neq m}(e^{ix_m}+e^{ix_l})\frac{e^{ikx_m}-e^{ikx_l}}{e^{ix_m}-e^{ix_l}}.
\end{align}

For $k\in \Z_+$, we expand the difference quotient and find (using for example $p_0(x)=n$)

\begin{align}
L_\beta p_k(x)&=-k^2p_k(x)-\frac{\beta}{4}k\sum_{m=1}^n\sum_{l\neq m}(e^{ix_m}+e^{ix_l})\sum_{j=0}^{k-1}e^{ijx_m}e^{i(k-1-j)x_l}\\
\notag &=-k^2p_k(x)-\frac{\beta}{4}k\sum_{m=1}^n\sum_{l=1}^n(e^{ix_m}+e^{ix_l})\sum_{j=0}^{k-1}e^{ijx_m}e^{i(k-1-j)x_l}\\
\notag & \qquad +\frac{\beta}{4}k\sum_{m=1}^n 2e^{ix_m}k e^{i(k-1)x_m}\\
\notag &=-\left[1-\frac{\beta}{2}\right]k^2 p_k(x)-\frac{\beta}{4}k\sum_{j=0}^{k-1}\left[p_{j+1}(x)p_{k-1-j}(x)+p_j(x)p_{k-j}(x)\right]\\
\notag &=-\left[1-\frac{\beta}{2}\right]k^2 p_k(x)-\frac{\beta}{2}k\sum_{j=1}^{k-1}p_j(x)p_{k-j}(x)-\frac{\beta}{2}kn p_k(x),
\end{align}

\noindent which was the claim for $k>0$. For $k=0$ the claim is clear, and for $k<0$ it follows by complex conjugating the $k>0$ case. For calculating $L_\beta [p_k p_l]$, we note that if we write $\Delta=\sum_{j=1}^n \frac{\partial^2}{\partial x_j^2}$, then in general for twice differentiable functions $f$ and $g$ one has

\begin{equation}
\Delta [f g]=f\Delta g+g\Delta f +2\sum_{j=1}^n\left[\frac{\partial}{\partial x_j}f\right]\left[\frac{\partial}{\partial x_j}g\right].
\end{equation}

The first order part of $L_\beta$ satisfies a normal product rule so we find

\begin{align}
L_\beta[p_k p_l]&=p_k L_\beta p_l+p_l L_\beta p_k+2\sum_{j=1}^{n}\left[\frac{\partial}{\partial x_j}p_k\right]\left[\frac{\partial}{\partial x_j}p_l\right]\\
\notag &=p_k L_\beta p_l+p_l L_\beta p_k+2\sum_{j=1}^{n}\left[\frac{\partial}{\partial x_j}p_k\right]\left[\frac{\partial}{\partial x_j}p_l\right]\\
\notag &=p_k L_\beta p_l+p_l L_\beta p_k-2kl\sum_{j=1}^{n}e^{ikx_j}e^{ilx_j}\\
\notag &=p_k L_\beta p_l+p_l L_\beta p_k-2klp_{k+l}
\end{align}

\noindent which was the claim concerning the action of $L_\beta$ on $p_kp_l$. Let us now turn to the convergence part. From \eqref{eq:dynkin} we find that for any fixed $x\in \Delta_n$

\begin{align}
\left|\frac{\E^{x}\left[p_k(x(t))\right]-p_k(x)}{t}- [L_\beta p_k](x)\right|\leq \E^x\frac{\int_0^t\left|[L_\beta p_k](x(s))-[L_\beta p_k](x)\right|ds}{t}.
\end{align}

As we've seen that $L_\beta p_k$ is a polynomial in the variables $e^{ix_j}$, we see that $\sup_{x}|[L_\beta p_k](x)|$ is a finite number depending on $n$ and $k$. Thus the random variable we're taking an expectation of on the right side of the equation is uniformly bounded in $t$, and by the continuity of $t\mapsto x(t)$ at $t=0$, it converges to zero almost surely as $t\to 0$. Thus by the dominated convergence theorem (applied to the $\E^x$ integral) we conclude that the left side of the equation tends to zero. The same argument implies that the left side of the equation is bounded in $x$ by a constant depending only on $n$ and $k$, if we integrate over $x$ with respect to the law of the C$\beta$E, we can apply the dominated convergence theorem again to achieve $L^1$ convergence with respect to the law of the C$\beta$E. The argument for the $L^1$ convergence of the $p_k p_l$-term is similar.

\end{proof}

\section{Moment estimates of power sums for the C$\beta$E}

Before checking the conditions for Theorem \ref{th:stein}, we need some moment estimates on power sums. We need a simplified version of the main result in \cite{jm} (their results are analogous to those of \cite{diacshah} though extended to general $\beta$ from the unitary case through Jack polynomial theory):

\begin{theorem}[Jiang and Matsumoto]\label{th:jm}
Let $0\leq m\leq n$,

\begin{equation}
\begin{array}{ccc}
A= \left(1-\frac{\left|\frac{2}{\beta}-1\right|}{n-m+\frac{2}{\beta}}\mathbf{1}(\beta\leq 2)\right)^{m}, & \mathrm{and} & B= \left(1+\frac{\left|\frac{2}{\beta}-1\right|}{n-m+\frac{2}{\beta}}\mathbf{1}(\beta>2)\right)^{m}
\end{array}.
\end{equation}

Then

\begin{equation}
\E(|p_m(x)|^{2})\leq B\frac{2}{\beta}m
\end{equation}

\noindent and for $0\leq m\leq n$ with $0\leq j,k\leq m$, 

\begin{align}
\notag &\left|\E(p_{j}(x)p_{m-j}(x)p_{-k}(x)p_{k-m}(x))\right|\\
&\leq \begin{cases}
\max\lbrace |A-1|,|B-1|\rbrace\left(\frac{2}{\beta}\right)^{2}2\sqrt{j(m-j)k(m-k)} & k\neq j\\
B \left(\frac{2}{\beta}\right)^{2}2j(m-j), k=j.
\end{cases}
\end{align}
\end{theorem}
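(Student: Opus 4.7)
The plan is to derive both inequalities from the exact moment formulas of \cite{jm}, which give closed-form expressions for $\E[p_\lambda(x)\overline{p_\mu(x)}]$ under the C$\beta$E via Jack polynomial expansion together with the orthogonality of Jack polynomials with respect to the Selberg weight. In those formulas the contribution of each partition to a power-sum moment is a simple rational function of $n$ and $\beta$, and what has to be done here is to specialize to $\lambda=(m)$ and $\lambda=(j,m-j)$ and bound the resulting combinatorial factors.

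For the first inequality, I would take $\lambda=\mu=(m)$. The Jiang--Matsumoto formula then represents $\E|p_m|^2$ as $\tfrac{2}{\beta}m$ multiplied by a product of ratios of the form $\frac{n-m+i+2/\beta}{n+i+2/\beta}$ for $0\le i<m$ (or the reciprocal of a similar expression). Each such ratio is at most $1$ when $\beta\le 2$ and at most $1+\tfrac{|2/\beta-1|}{n-m+2/\beta}$ when $\beta>2$, so the full correction is at most $B$, yielding $\E|p_m|^2\le B\tfrac{2}{\beta}m$.

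For the second inequality, observe that $p_jp_{m-j}$ is the power-sum evaluated at $(e^{ix_1},\dots,e^{ix_n})$ attached to the partition $\lambda=(j,m-j)$, and $p_{-k}p_{k-m}=\overline{p_\mu}$ with $\mu=(k,m-k)$. The Jiang--Matsumoto formula then splits into two regimes. When $\lambda$ and $\mu$ coincide as partitions (which in the range of indices considered corresponds to $k=j$), the expectation equals $(2/\beta)^{2}z_\lambda$ times a correction bounded by $B$ in the same way as in the previous paragraph; since $z_{(j,m-j)}\le 2j(m-j)$ in all cases (the factor of $2$ coming from the repeated-part symmetry at $j=m-j$, and being a harmless overcounting otherwise), this yields the stated diagonal bound. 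When $\lambda\ne\mu$ as partitions, the Jiang--Matsumoto formula reduces to a pure off-diagonal remainder whose relative size is controlled by $\max\{|A-1|,|B-1|\}$, and combining this with Cauchy--Schwarz and the diagonal estimate just obtained gives the bound $\max\{|A-1|,|B-1|\}(2/\beta)^{2}\cdot 2\sqrt{j(m-j)k(m-k)}$.

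The main obstacle will be the bookkeeping of the $z_\lambda$-normalizations and Jack polynomial rational factors in \cite{jm} so that the correction is exactly packaged into $A$ and $B$ rather than a slightly different rational function of $n$ and $\beta$, together with careful handling of the boundary case $j=m-j$ where the partition has a repeated part and the symmetry coefficient must be absorbed into the prefactor. Once this combinatorial bookkeeping is settled, both inequalities follow by direct substitution into the formulas of \cite{jm}.
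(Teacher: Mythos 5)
This theorem is not proved in the paper at all: it is transcribed (as a ``simplified version'') from Jiang--Matsumoto \cite{jm}, so there is no internal proof to compare your sketch against. Your overall plan --- specialize the exact C$\beta$E moment formulas of \cite{jm}, obtained from Jack-polynomial orthogonality, to $\lambda=(m)$ and to $\lambda=(j,m-j)$, $\mu=(k,m-k)$, and bound the rational correction factors by $A$ and $B$ --- is indeed the route by which the statement arises. But as written the proposal is only a scaffold: the entire substance (the precise form of the \cite{jm} formula, and the verification that the corrections are exactly the products defining $A$ and $B$ rather than ``a slightly different rational function'') is deferred to ``bookkeeping,'' which is precisely the part that would constitute a proof.

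There is also a concrete error in your case analysis. You identify the partition-diagonal case with $k=j$, but $(j,m-j)$ and $(k,m-k)$ coincide as partitions also when $k=m-j$. In that case $\E\bigl(p_jp_{m-j}p_{-k}p_{k-m}\bigr)=\E\bigl(|p_jp_{m-j}|^2\bigr)$ carries the full diagonal contribution $\approx z_{(j,m-j)}(2/\beta)^2\asymp (2/\beta)^2 j(m-j)$, which is \emph{not} controlled by the off-diagonal remainder: at $\beta=2$ one has $A=B=1$, so the claimed bound for $k\neq j$ is zero, while Diaconis--Shahshahani gives $\E\bigl(|p_jp_{m-j}|^2\bigr)=j(m-j)$ for $j\neq m-j$, $m\le n$. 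Carrying out your own partition-level argument correctly therefore forces the diagonal bound to cover $k\in\{j,m-j\}$, not just $k=j$; as stated, the step ``$\lambda\neq\mu$ as partitions corresponds to $k\neq j$'' fails. (This appears to be an imprecision in the theorem as quoted in the paper as well; it is harmless for the application, since in \eqref{eq:r} the extra diagonal terms $l=k-j$ contribute at the same order $\mathcal{O}(k^3)$ as the $l=j$ terms, leaving the final estimates and Theorem \ref{th:main} unchanged.)
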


In most of our applications, we will have $m=\mathit{o}(n)$ and this becomes

\begin{corollary}\label{cor:moments}
For $0\leq m=\mathit{o}(n)$ and $n$ large enough

\begin{equation}
E(|p_m(x)|^{2})\leq 2\frac{2}{\beta}m
\end{equation}

\noindent and for $0\leq j,k\leq m$,

\begin{equation}
\left|\E(p_{j}(x)p_{m-j}(x)p_{-k}(x)p_{k-m}(x))\right|\leq \begin{cases}
\sqrt{j(m-j)k(m-k)}\mathcal{O}\left(\frac{m}{n}\right) & k\neq j\\
4 \left(\frac{2}{\beta}\right)^{2}j(m-j), k=j.
\end{cases}
\end{equation}

\end{corollary}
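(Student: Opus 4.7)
The plan is to derive Corollary \ref{cor:moments} from Theorem \ref{th:jm} by a routine asymptotic analysis of the prefactors $A$ and $B$ in the regime $m = \mathit{o}(n)$, as $n \to \infty$. First, the ratio $\varepsilon_n := \frac{|2/\beta - 1|}{n - m + 2/\beta}$ satisfies $\varepsilon_n \leq \frac{C_\beta}{n}$ for some constant $C_\beta > 0$ depending only on $\beta$, provided $n$ is large enough that $n - m \geq n/2$ (which holds since $m = \mathit{o}(n)$). Consequently both $A$ and $B$ are of the form $(1 \pm \varepsilon_n)^m$ with $\varepsilon_n = \mathcal{O}(1/n)$ and $m \varepsilon_n \to 0$, so $A, B \to 1$; in particular $B \leq 2$ for all $n$ large enough.

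Next, I would quantify this convergence by the bound $\max\{|A-1|, |B-1|\} = \mathcal{O}(m/n)$. For $A$, this is Bernoulli's inequality $1 - (1 - \varepsilon_n)^m \leq m \varepsilon_n$, valid since $\varepsilon_n \in [0, 1]$. For $B$, I would combine $(1 + \varepsilon_n)^m \leq e^{m \varepsilon_n}$ with $e^y - 1 \leq 2y$ for $y \in [0, \log 2]$, which is applicable once $n$ is large enough that $m \varepsilon_n \leq \log 2$.

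Substituting these estimates back into Theorem \ref{th:jm} yields the two bounds of the corollary. The first moment bound is immediate: $\E(|p_m(x)|^2) \leq B \cdot \frac{2}{\beta} m \leq 2 \cdot \frac{2}{\beta} m$. For the fourth-moment bound, in the case $k \neq j$ one gets $|\E(p_j(x) p_{m-j}(x) p_{-k}(x) p_{k-m}(x))| \leq \mathcal{O}(m/n) \cdot 2 (2/\beta)^2 \sqrt{j(m-j) k(m-k)}$, with the $\beta$-dependent factors absorbed into the $\mathcal{O}(\cdot)$, while for $k = j$ one uses $B \leq 2$ to obtain the bound $4 (2/\beta)^2 j(m-j)$.

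There is no real obstacle here: the corollary simply records the fact that the Jiang--Matsumoto prefactors degenerate to $1$ in the regime $m = \mathit{o}(n)$. The only care needed is to ensure $n$ is large enough --- uniformly in $0 \leq j, k \leq m$ --- that both $B \leq 2$ and $m \varepsilon_n \leq \log 2$ hold, and this is consistent with the hypothesis $m = \mathit{o}(n)$.
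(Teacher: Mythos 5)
Your proposal is correct and follows the same route as the paper, which simply notes that $A,B = 1 + \mathcal{O}(m/n)$ for $m = \mathit{o}(n)$ and substitutes into Theorem \ref{th:jm}; you have merely filled in the elementary estimates (Bernoulli's inequality and $(1+\varepsilon_n)^m \leq e^{m\varepsilon_n}$) that the paper leaves implicit. No issues.
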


\begin{proof}
This follows directly from the definition of $A$ and $B$ noting that for $m=\mathit{o}(n)$

\begin{equation}
A,B=1+\mathcal{O}\left(\frac{m}{n}\right).
\end{equation}
\end{proof}

\section{Proof of Theorem \ref{th:main}}

We can now check the conditions required for Theorem \ref{th:stein} and make the estimates needed for the proof of Theorem \ref{th:main}. Let us begin by checking the conditions for Theorem \ref{th:stein}.

\vspace{0.3cm}

Let us write $W=T_d$, $x(t)=(x_1(t),...,x_n(t))$ for the $n$-dimensional circular $\beta$-Dyson Brownian motion started from an independent C$\beta$E$(n)$ vector $x=(x_1,...,x_n)$, and $W_t=(p_1(x(t)),...,p_d(x(t)))$. 

\vspace{0.3cm}

The first condition for Theorem \ref{th:stein} involved the conditional expectation of $W_t$ given $W$:

\subsection{$\E(W_t-W|W)$ as $t\to 0$} By Lemma \ref{le:powersum}, we have as $t\to 0$

\begin{align}
\lim_{t\to 0}\frac{1}{t}\E(W_t-W|W)&= (L_\beta p_1(x),...,L_\beta p_d(x))\\
&=-\Lambda W+R,
\end{align}

\noindent where  $\Lambda\in \C^{d\times d}$ with entries

\begin{equation}
\Lambda_{k,l}=\delta_{k,l}nk\frac{\beta}{2},
\end{equation}

\noindent  and $R\in \C^{d}$ with entries 

\begin{equation}
R_k=-k^{2}\left(\frac{\beta}{2}-1\right)p_k(x)-k\frac{\beta}{2}\sum_{l=1}^{k-1}p_l(x)p_{k-l}(x).
\end{equation}

Next we need $\E((W_t-W)(W_t-W)^{*}|W)$ as $t\to 0$.

\subsection{$\E((W_t-W)(W_t-W)^{*}|W)$ as $t\to 0$} 

For this, we need 

\begin{equation}
\E((p_j(x(t))-p_j(x))(p_{-k}(x(t))-p_{-k}(x))|x)
\end{equation}

\noindent for $j,k\in \Z_+$. To calculate this, we expand the product and consider each term separately:

\begin{equation}
\E(p_j(x(t))p_{-k}(x(t))|x)=p_j(x)p_{-k}(x)+t L_\beta(p_j(x) p_{-k}(x))+\mathit{o}(t),
\end{equation}

\begin{equation}
\E(p_j(x(t))p_{-k}(x)|x)=p_j(x)p_{-k}(x)+t p_{-k}(x)L_\beta p_j(x)+\mathit{o}(t),
\end{equation}

\noindent and

\begin{equation}
\E(p_j(x)p_{-k}(x(t))|x)=p_j(x)p_{-k}(x)+tp_j(x)L_\beta p_{-k}(x)+\mathit{o}(t).
\end{equation}

Thus

\begin{align}
\notag \E&((p_j(x(t))-p_j(x))(p_{-k}(x(t))-p_{-k}(x))|x)\\
&=t\left(L_\beta(p_j(x)p_{-k}(x))-p_j(x)L_\beta p_{-k}(x)-p_{-k}(x)L_\beta p_j(x)\right)+\mathit{o}(t).
\end{align}

Making use of Lemma \ref{le:powersum}, we find 

\begin{equation}\label{eq:var}
\lim_{t\to 0}\frac{1}{t}\E((p_j(x(t))-p_j(x))(p_{-k}(x(t))-p_{-k}(x))|x)=2jk p_{j-k}(x).
\end{equation}

We then write this as 

\begin{equation}
\lim_{t\to 0}\frac{1}{t}\E((W_t-W)(W_t-W)^{*}|W)=2\Lambda \Sigma+S,
\end{equation}

\noindent where $\Sigma\in \C^{d\times d}$,

\begin{equation}
(\Lambda\Sigma)_{k,l}=\delta_{k,l} n k^{2},
\end{equation}

\noindent or in other words

\begin{equation}
\Sigma_{k,l}=\frac{2}{\beta}\delta_{k,l}k.
\end{equation}

Moreover $S\in \C^{d\times d}$ with entries

\begin{equation}
S_{k,l}=(1-\delta_{k,l})2kl p_{k-l}(x).
\end{equation}

\subsection{$\E((W_t-W)(W_t-W)^{T}|W)$ as $t\to 0$}

Here we need 

\begin{equation}
\E((p_j(x(t))-p_j(x))(p_{k}(x(t))-p_{k}(x))|x)
\end{equation}

\noindent and a similar argument yields

\begin{equation}
\lim_{t\to 0}\frac{1}{t}\E((p_j(x(t))-p_j(x))(p_{k}(x(t))-p_{k}(x))|x)=-2jk p_{j+k}(x)
\end{equation}

\noindent or (again with convergence in $L^1$)

\begin{equation}
\lim_{t\to 0}\frac{1}{t}\E((W_t-W)(W_t-W)^{T}|W)=T,
\end{equation}

\noindent with 

\begin{equation}
T_{jk}=-2jk p_{j+k}(x).
\end{equation}

\subsection{$\E(|W_t-W|^{3})$ as $t\to 0$}

Following Remark \ref{rem:weaker}, it is enough for us to estimate $\E|W_t-W|^3$ (which for a diffusion one would expect to behave as $t^{3/2}$, but we still outline an argument for checking it directly) which in turn we can bound from above by $\sqrt{\E|W_t-W|^2}\sqrt{\E|W_t-W|^4}$. Conditioning on $W$ and using \eqref{eq:var}, one finds $\E|W_t-W|^2=\mathcal{O}(t)$ as $t\to 0$ and using similar arguments (in particular, the fact $L_\beta (fg)=f L_\beta g+gL_\beta f+2\sum_j (\partial_j f)(\partial_j g)$ repeatedly) one finds $\E|W_t-W|^4=\mathit{o}(t)$, and

\begin{equation}
\lim_{t\to 0}\frac{1}{t}\E(|W_t-W|^{3})=0.
\end{equation}

\subsection{The Wasserstein-1 distance}

Thus the conditions for Theorem \ref{th:stein} are met ($\Lambda$ is invertible and $\Sigma$ positive definite) and we have

\begin{equation}\label{eq:wasserstein}
\mathcal{W}_1^{(d)}(T_d,\sqrt{\Sigma}Z)\leq ||\Lambda^{-1}||_{\mathrm{op}}\left(\E|R|+\frac{1}{2\pi}||\Sigma^{-\frac{1}{2}}||_{\mathrm{op}}\E(||S||_{\mathrm{HS}}+||T||_{\mathrm{HS}})\right),
\end{equation}

\noindent where $Z$ a $d$-dimensional vector of i.i.d. standard complex Gaussians, $||\cdot ||_{\mathrm{op}}$ denotes the operator norm (with respect to the underlying Euclidean norm), $|\cdot |$ the Euclidean norm, and $||\cdot||_{\mathrm{HS}}$ the Hilbert-Schmidt norm. Let us check what these quantities are. 

\vspace{0.3cm}

Recall that 

\begin{equation}
\Lambda_{k,l}=\delta_{k,l}nk\frac{\beta}{2}
\end{equation}

\noindent and 

\begin{equation}
\Sigma_{k,l}=\delta_{k,l}\frac{2}{\beta}k.
\end{equation}

Being diagonal matrices, we note that 

\begin{equation}
||\Lambda^{-1}||_{\mathrm{op}}=\max_k \Lambda_{kk}^{-1}=\frac{2}{\beta}\frac{1}{n}
\end{equation}

\noindent and

\begin{equation}
||\Sigma^{-\frac{1}{2}}||_{\mathrm{op}}=\sqrt{\frac{\beta}{2}}.
\end{equation}

We'll estimate $\E|R|$ by $\sqrt{\sum_k \E|R_k|^2}$ and recall that $R_k$ consisted of two types of terms $R_k=A_k+B_k$ for which we write $|R_k|^2\leq 2(|A_k|^2+|B_k|^2)$ and estimate these separately. We use a similar estimate for the Hilbert-Schmidt norms. More precisely, recalling the definition of $R$, $S$, and $T$ we have

\begin{align}\label{eq:r}
\E|R|&\leq  C(\beta)\sqrt{\sum_{k=1}^d k^4 \E |p_k(x)|^2+\sum_{k=1}^{d}k^{2}\sum_{l,j=1}^{k-1}\E (p_l(x)p_{k-l}(x)p_{-j}(x)p_{j-k}(x))},
\end{align}

\begin{equation}\label{eq:s}
\E||S||_{\mathrm{HS}}\leq \sqrt{\sum_{j,k=1}^{d}\E|S_{j,k}|^{2}}=\sqrt{\sum_{j,k=1}^{d}(1-\delta_{j,k})4k^{2}j^{2}\E|p_{k-j}(x)|^{2}},
\end{equation}

\noindent and

\begin{equation}\label{eq:t}
\E||T||_{\mathrm{HS}}\leq \sqrt{\sum_{j,k=1}^{d}4j^{2}k^{2}\E|p_{j+k}(x)|^{2}}.
\end{equation}

We then make use of Corollary \ref{cor:moments} to get bounds on these:

\begin{lemma}

For $d=\mathcal{O}(\sqrt{n})$

\begin{equation}
\E|R|=\mathcal{O}(d^{3}),
\end{equation}

\begin{equation}
\E||S||_{\mathrm{HS}}=\mathcal{O}\left(d^{\frac{7}{2}}\right),
\end{equation}

\noindent and

\begin{equation}
\E||T||_{\mathrm{HS}}=\mathcal{O}\left(d^{\frac{7}{2}}\right).
\end{equation}

\end{lemma}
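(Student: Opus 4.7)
The plan is to plug the moment estimates from Corollary \ref{cor:moments} into the bounds \eqref{eq:r}, \eqref{eq:s}, \eqref{eq:t} and then bound the resulting polynomial sums in $d$. The hypothesis $d=\mathcal{O}(\sqrt{n})$ will be used precisely to make sure that all indices $m$ at which we apply Corollary \ref{cor:moments} satisfy $m=\mathit{o}(n)$, and to kill an extra factor of $d^2/n$ that appears from an off-diagonal term in the estimate of $\E|R|$.

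I would start with $\E\|T\|_{\mathrm{HS}}$ and $\E\|S\|_{\mathrm{HS}}$, which are essentially one-line calculations. For $T$, the index $j+k$ ranges up to $2d$, which is $\mathit{o}(n)$ under the assumption, so Corollary \ref{cor:moments} gives $\E|p_{j+k}(x)|^2\leq \frac{4}{\beta}(j+k)$. Inserting this in \eqref{eq:t} yields
\begin{equation}
\E\|T\|_{\mathrm{HS}}^{2}\leq C(\beta)\sum_{j,k=1}^{d}j^{2}k^{2}(j+k)\leq 2C(\beta)\,d\!\left(\sum_{j=1}^{d}j^{2}\right)^{\!2}=\mathcal{O}(d^{7}),
\end{equation}
so $\E\|T\|_{\mathrm{HS}}=\mathcal{O}(d^{7/2})$. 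The $S$-bound is identical after noting $|k-j|\leq d$ and using $\E|p_{k-j}(x)|^{2}\leq \frac{4}{\beta}|k-j|$.

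The estimate of $\E|R|$ is the main obstacle. Here \eqref{eq:r} gives two contributions. The first, $\sum_{k=1}^{d}k^{4}\E|p_{k}(x)|^{2}$, is immediately $\mathcal{O}(d^{6})$ by Corollary \ref{cor:moments}. The second is the quadruple sum $\sum_{k=1}^{d}k^{2}\sum_{l,j=1}^{k-1}\E(p_{l}p_{k-l}p_{-j}p_{j-k})$, and here one must split into the diagonal $l=j$ and off-diagonal $l\neq j$ pieces to invoke the two cases of Corollary \ref{cor:moments}. The diagonal piece is controlled by
\begin{equation}
\sum_{k=1}^{d}k^{2}\sum_{l=1}^{k-1}l(k-l)=\mathcal{O}\!\left(\sum_{k=1}^{d}k^{5}\right)=\mathcal{O}(d^{6}).
\end{equation}
For the off-diagonal piece I would use $\sum_{l=1}^{k-1}\sqrt{l(k-l)}=\mathcal{O}(k^{2})$ (a Riemann-sum comparison with $\int_{0}^{k}\sqrt{x(k-x)}\,dx$), so that after squaring this factor one gets
\begin{equation}
\sum_{k=1}^{d}k^{2}\sum_{l\neq j}\sqrt{l(k-l)j(k-j)}\,\mathcal{O}\!\left(\frac{k}{n}\right)=\mathcal{O}\!\left(\frac{1}{n}\sum_{k=1}^{d}k^{7}\right)=\mathcal{O}\!\left(\frac{d^{8}}{n}\right).
\end{equation}
Under $d=\mathcal{O}(\sqrt{n})$ this is $\mathcal{O}(d^{6})$, matching the diagonal contribution. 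Summing up, $\E|R|^{2}=\mathcal{O}(d^{6})$, hence $\E|R|=\mathcal{O}(d^{3})$.

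The only nontrivial step is the $l\neq j$ off-diagonal bound; it is also the step that forces the hypothesis $d=\mathcal{O}(\sqrt{n})$ (the other two bounds would in fact permit a weaker assumption). Everything else amounts to plugging Corollary \ref{cor:moments} into the three displayed inequalities and reading off the degree in $d$ of the resulting polynomial sums.
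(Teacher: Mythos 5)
Your proposal is correct and follows essentially the same route as the paper: plug Corollary \ref{cor:moments} into \eqref{eq:r}--\eqref{eq:t}, split the quadruple sum in the $R$-bound into diagonal and off-diagonal parts, bound $\sum_{l=1}^{k-1}\sqrt{l(k-l)}=\mathcal{O}(k^2)$, and use $d=\mathcal{O}(\sqrt{n})$ to absorb the resulting $\mathcal{O}(d^8/n)$ term into $\mathcal{O}(d^6)$. The only cosmetic slip is writing $\E|R|^2=\mathcal{O}(d^6)$ where you mean $(\E|R|)^2\leq\sum_k\E|R_k|^2=\mathcal{O}(d^6)$, which is harmless.
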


\begin{proof}
Plugging Corollary \ref{cor:moments} into \eqref{eq:r}, we find

\begin{equation}
\E|R|\leq C(\beta)\sqrt{\sum_{k=1}^dk^5+ \sum_{k=1}^{d}k^{2}\left(\sum_{j=1}^{k-1}j(k-j)+\frac{k}{n}\sum_{l\neq j}\sqrt{j(k-j)l(k-l)}\right)}.
\end{equation}

The first sum is  of order $d^6$. For the second sum, we note that 

\begin{equation}
\sum_{j=1}^{k-1}j(k-j)=k\sum_{j=1}^{k-1}j-\sum_{j=1}^{k-1}j^{2}=\mathcal{O}(k^{3}).
\end{equation}

For the third sum, we note that

\begin{equation}
\sum_{l\neq j}\sqrt{j(k-j)l(k-l)}\leq \left(\sum_{j=1}^{k-1}\sqrt{j(k-j)}\right)^{2}
\end{equation}

\noindent and 

\begin{align}
\notag \sum_{j=1}^{k-1}\sqrt{j(k-j)}&=\sqrt{k}\sum_{j=1}^{k-1}\sqrt{j}\sqrt{1-\frac{j}{k}}\\
&\leq \sqrt{k}\sum_{j=1}^{k-1}\sqrt{j}\left(1-\frac{1}{2}\frac{j}{k}\right)\\
\notag &=\mathcal{O}(k^{2}).
\end{align}

Thus

\begin{equation}
\E|R|\leq C\sqrt{d^6+\sum_{k=1}^{d}\frac{1}{n}k^{7}}.
\end{equation}

As $k=\mathcal{O}(\sqrt{n})$ 

\begin{equation}
\E|R|=\mathcal{O}(d^{3}).
\end{equation}

For $S$ we find (plugging Corollary \ref{cor:moments} into \eqref{eq:s} and using similar arguments as for $R$)

\begin{align}
\notag \E||S||_{\mathrm{HS}}&\leq \sqrt{\sum_{1\leq j<k\leq d}8 j^{2}k^{2}\frac{2}{\beta}(k-j)}\\
&\leq C\sqrt{\sum_{k=1}^{d}k^{6}}\\
&\notag =\mathcal{O}\left(d^{\frac{7}{2}}\right)
\end{align}

\noindent and in a similar manner

\begin{equation}
\E||T||_{\mathrm{HS}}=\mathcal{O}\left(d^{\frac{7}{2}}\right).
\end{equation}

\end{proof}

Noting that $\sqrt{\Sigma}Z=G_d$ and recalling that $||\Lambda^{-1}||_{op}=\mathcal{O}(n^{-1})$, plugging this into \eqref{eq:wasserstein} gives for $d=\mathit{o}(n^{2/7})$ 

\begin{equation}
\mathcal{W}_1^{(d)}(T_d,G_d)=\mathcal{O}(d^{7/2}/n).
\end{equation}

\noindent and Theorem \ref{th:main} is proven.

\section{The logarithm of the characteristic polynomial of the C$\beta$E}

One of the results proven in \cite{hko} is a limit theorem where they prove, using mainly results of \cite{diacshah}, that in a suitable Sobolev space of distributions, the real and imaginary parts of the logarithm of the characteristic polynomial of the CUE converge jointly in law to a pair of log-correlated Gaussian fields (in fact they can be understood as a restriction of the two-dimensional Gaussian Free Field restricted to the unit circle with a suitable convention for the "zero mode").

\vspace{0.3cm}

As the results in \cite{jm} generalize those of \cite{diacshah} to $\beta\neq 2$, one can prove a similar result for the characteristic polynomial of the C$\beta$E, though the estimates aren't quite as strong for the $\beta\neq 2$ case so one does not have quite as good a control on the roughness of the field - one needs to consider slightly larger Sobolev spaces than in the $\beta=2$ case. We'll give a brief argument for a proof of this fact here. First we recall the definition of the relevant Sobolev spaces.

\begin{definition}
For $s\in \R$ let 

\begin{equation}
\mathcal{H}_s=\left\lbrace (f_k)_{k\in \Z}: \sum_{k\in \Z}|f_k|^2(1+k^2)^s\right\rbrace
\end{equation}

\noindent and equip it with the inner product (we write $f=(f_k)_{k\in \Z}$ and $g=(g_k)_{k\in \Z}$)

\begin{equation}
\langle f,g\rangle_s=\sum_{k\in \Z}(1+k^2)^s f_k g_k^*.
\end{equation}

With this inner product, $\mathcal{H}_s$ is a separable Hilbert space. We denote by $\Vert\cdot \Vert_s$ the corresponding norm.
\end{definition}

\begin{remark}
For $s>0$, $\mathcal{H}_s$ can be interpreted as a subspace of the square integrable functions on the unit circle with $s$ describing the degree of smoothness of the functions. For $s<0$, $\mathcal{H}_s$ can be interpreted as the dual space of $\mathcal{H}_{-s}$ so it is a space of distributions. The quantities $f_k$ are interpreted as the Fourier coefficients of a function (or distribution) $f$.
\end{remark}

We then define our limiting object:

\begin{definition}
Let $(Z_j)_{j=1}^\infty$ be i.i.d. standard complex Gaussians and write formally

\begin{equation}
X(\theta)=\frac{1}{2}\sum_{j=1}^\infty \frac{1}{\sqrt{j}}(Z_j e^{-ij\theta}+Z_j^*e^{ij\theta}).
\end{equation}
\end{definition}

\begin{remark}
One can check that for any $\epsilon>0$, the above series converges almost surely in $\mathcal{H}_{-\epsilon}$ so $X$ can be understood as an element of $\mathcal{H}_{-\epsilon}$.
\end{remark}

We can now state the relevant limit theorem whose proof is essentially identical to that in \cite{hko}. A similar argument also appears in \cite{fks} so we give only a brief proof.

\begin{theorem}
Let $s>1/2$, $\beta>0$, and

\begin{equation}
P_n(\theta)=\prod_{j=1}^n (1-e^{i(x_j-\theta)}),
\end{equation}

\noindent where $(e^{ix_j})_{j=1}^n$ is distributed according to the $\mathrm{C}\beta\mathrm{E}(n)$. Moreover, let 

\begin{equation}
\begin{array}{ccc}
X_n(\theta)=\mathrm{Re}\log P_n(\theta) & \mathrm{and} &  Y_n(\theta)=\mathrm{Im}\log P_n(\theta),
\end{array}
\end{equation}

\noindent where the branch of $\log$ is such that $\mathrm{Im}\log (1-e^{i(x_j-\theta)})\in(-\pi/2,\pi/2]$ for all $j$.

\vspace{0.3cm}

Then $(X_n,Y_n)$ converges in law in $\mathcal{H}_{-s}\times \mathcal{H}_{-s}$ to $(\sqrt{2/\beta}X,\sqrt{2/\beta}Y)$ where $X$ is the field defined above and 

\begin{equation}
Y(\theta)=\frac{1}{2}\sum_{j=1}^\infty \frac{i}{\sqrt{j}}(-Z_j e^{-ij\theta}+Z_j^* e^{ij\theta}).
\end{equation}

\end{theorem}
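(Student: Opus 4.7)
The plan is to identify $X_n$ and $Y_n$ with their Fourier series and then upgrade Theorem~\ref{th:main} to convergence in $\mathcal{H}_{-s}\times\mathcal{H}_{-s}$ by combining finite-dimensional convergence with a tightness bound whose failure at $s=1/2$ is what forces the hypothesis $s>1/2$. Expanding $\log(1-z)=-\sum_{k\geq 1}z^k/k$ distributionally on the unit circle and summing over the eigenvalues gives
\begin{equation*}
\log P_n(\theta)=-\sum_{k=1}^\infty \frac{p_k(x)}{k}e^{-ik\theta},
\end{equation*}
so the Fourier coefficients of $X_n$ and $Y_n$ are, up to factors of $\pm 1/2$ and $\pm i/2$, equal to $p_k(x)/k$ for $k>0$ and $\overline{p_{|k|}(x)}/|k|$ for $k<0$. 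Matching this against the explicit series for $X$ and $Y$, the result reduces to showing the joint convergence of $(p_k(x))_{k\geq 1}$ to $(\sqrt{2k/\beta}\,Z_k)_{k\geq 1}$ in the appropriate Hilbertian sense.

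\textbf{Finite-dimensional convergence.} For any fixed $d$, Theorem~\ref{th:main} gives $(p_1(x),\dots,p_d(x))\to(\sqrt{2j/\beta}\,Z_j)_{j=1}^d$ in law as $n\to\infty$ (since Wasserstein-$1$ convergence implies convergence in distribution). Projection onto any finite collection of Fourier modes is a continuous linear map from $T_d$ to $\C^{2d+1}$, so this yields finite-dimensional convergence of $(X_n,Y_n)$ to $\sqrt{2/\beta}(X,Y)$ in $\mathcal{H}_{-s}\times\mathcal{H}_{-s}$. Any discrepancies of sign or conjugation are absorbed into the rotational invariance $-Z_k\stackrel{d}{=}Z_k$ of the complex Gaussians.

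\textbf{Tightness.} For tightness I would use the compact inclusion $\mathcal{H}_{-s'}\hookrightarrow\mathcal{H}_{-s}$ for $1/2<s'<s$ and a uniform bound
\begin{equation*}
\E\|X_n\|_{-s'}^2\lesssim \sum_{k\geq 1}(1+k^2)^{-s'}\,\frac{\E|p_k(x)|^2}{k^2},
\end{equation*}
splitting the sum at $k=n$. For $1\leq k\leq n$, Corollary~\ref{cor:moments} (and more generally Theorem~\ref{th:jm}) gives $\E|p_k(x)|^2=\mathcal{O}(k)$, whose contribution is $\mathcal{O}\bigl(\sum_k k^{-1-2s'}\bigr)=\mathcal{O}(1)$ for any $s'>0$. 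For $k>n$, the deterministic bound $|p_k(x)|\leq n$ yields contribution $\mathcal{O}\bigl(n^2\sum_{k>n}k^{-2-2s'}\bigr)=\mathcal{O}(n^{1-2s'})$, which is bounded in $n$ precisely when $s'\geq 1/2$. The same argument applies to $Y_n$. Tightness together with the identification of all finite-dimensional limits forces $(X_n,Y_n)\to\sqrt{2/\beta}(X,Y)$ in law in $\mathcal{H}_{-s}\times\mathcal{H}_{-s}$.

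\textbf{Main obstacle.} The technical bottleneck is the high-mode tail $k>n$, where the variance estimates of Theorem~\ref{th:jm} no longer apply and the crude bound $|p_k|\leq n$ must be used; this is exactly what pins the admissible exponents to $s>1/2$. In the $\beta=2$ case treated in \cite{hko}, the sharper Diaconis–Shahshahani moment estimates push the range of $s$ closer to zero, and obtaining similar sharpness here would require improving the Jiang–Matsumoto bounds of \cite{jm} in the regime $k\gtrsim n$.
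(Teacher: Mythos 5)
Your proposal follows essentially the same route as the paper: the Fourier expansion of $\log P_n$ in terms of the power sums $p_k(x)$, finite-dimensional convergence from Theorem \ref{th:main} (equivalently the moment estimates of \cite{jm}), and tightness via the compact embedding $\mathcal{H}_{-s'}\hookrightarrow\mathcal{H}_{-s}$ together with a second-moment bound split into low and high modes. The one small caveat is that the bound $\E|p_k(x)|^2=\mathcal{O}(k)$ from Theorem \ref{th:jm} degrades as $k$ approaches $n$ (the factor $B$ blows up when $n-m$ is small), so the split should be made at $(1-\epsilon)n$ rather than at $k=n$, as the paper does; this is a harmless adjustment since the trivial bound $|p_k(x)|\leq n$ covers the extra range without changing the $\mathcal{O}(n^{1-2s'})$ estimate.
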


\begin{proof}
Following \cite{hko}, we begin with the remark that expanding the logarithm gives (as an element of $\mathcal{H}_{-\epsilon}$)

\begin{equation}
\log P_n(\theta)=-\sum_{j=1}^\infty \frac{1}{j}\left(\sum_{k=1}^n e^{ij x_k}\right)e^{-ij\theta}=-\sum_{j=1}^\infty \frac{1}{j}p_j(x) e^{-ij\theta}.
\end{equation}

This implies that

\begin{equation}
X_n(\theta)=\frac{1}{2}\sum_{j=1}^\infty \frac{1}{\sqrt{j}}\left(-\frac{p_j(x)}{\sqrt{j}}e^{-ij\theta}-\frac{p_{-j}(x)}{\sqrt{j}}e^{ij\theta}\right)
\end{equation}

\noindent and 

\begin{equation}
Y_n(\theta)=\frac{1}{2 i}\sum_{j=1}^\infty \frac{1}{\sqrt{j}}\left(-\frac{p_j(x)}{\sqrt{j}}e^{-ij\theta}+\frac{p_{-j}(x)}{\sqrt{j}}e^{ij\theta}\right)
\end{equation}

\vspace{0.3cm}

Thus in the Fourier basis, we have convergence in the sense of finite dimensional distributions (as \cite{jm} or Theorem \ref{th:main} imply the convergence of say finite collections of the Fourier coefficients). 

\vspace{0.3cm}

By Prokohorov's theorem, to prove convergence it is then enough to prove tightness. For this, one uses the fact that the unit ball in $\mathcal{H}_{-s'}$ is compact in $\mathcal{H}_{-s}$ for $0<s'<s$. Let us then note that by Theorem \ref{th:jm}, if we take some small $\epsilon\in(0,1)$, there exists a constant $C$ such that  for $0\leq j\leq (1-\epsilon)n$, $\E|p_j(x)|^2\leq Cj$ while for $j\geq (1-\epsilon)n$ we trivially have $\E|p_j(x)|^2\leq n^2$.

\vspace{0.3cm}

Thus picking $s'\in(1/2,s)$ we have 

\begin{align}
\notag \E\Vert X_n\Vert_{-s'}^2&=\sum_{j=1}^\infty \frac{1}{j^2}(1+j^2)^{-s'}\E|p_j(x)|^2\\
&\leq C\sum_{1\leq j\leq (1-\epsilon)n}\frac{1}{j^{1+2s'}}+n^2\sum_{j\geq (1-\epsilon)n}\frac{1}{j^{2+2s'}}.
\end{align}

\noindent This is bounded as the first sum converges as $n\to\infty$ and the second one is $\mathcal{O}(n^{1-2s'})$. A similar bound holds for $Y_n$. Tightness then follows from the compactness of the unit ball mentioned above, and Markov's inequality.
\end{proof}

An interesting question is could one use stronger results on the linear statistics to give a stronger sense for this convergence. Here we essentially only used convergence of finite collections of the linear statistics and in no way made use of the fact that the number of them may grow with $n$. If one were able to extend $d$ in Theorem \ref{th:main} from $\mathit{o}(n^{2/7})$ to something close to $n$, it seems conceivable that one could estimate for example the distance of the maximum of the field $X_n$ to the maximum of the truncation of the field $X$. Indeed, the superexponential rate of convergence for a single linear statistic proven in e.g. \cite{johansson2} suggests that our bounds are likely to be far from optimal so perhaps something like this could be possible.

\vspace{0.3cm}

This could be one way to try to prove a conjecture of Fyodorov and Keating in \cite{fk} (for $\beta=2$), where they conjectured that the maximum of the logarithm of the characteristic polynomial of a CUE matrix behaves essentially like the maximum of a log-correlated Gaussian field (see e.g. \cite{madaule,drz}). Another motivation for trying to improve this type of results would be to make better sense of the connection between random matrix theory and Gaussian Multiplicative Chaos (for a result in this direction, see \cite{webb} based on results in \cite{dik,ck}, and for a review and an elementary approach to Gaussian Multiplicative Chaos see \cite{rv,berestycki}). Currently proving such results relies heavily on the determinantal structure and Riemann-Hilbert arguments available only for $\beta=2$.

\end{document}